\newtheorem{rem}[theorem]{Remark}
\newtheorem{exs}[theorem]{Examples}
\title{Extremal and optimal properties of B-bases Collocation Matrices \thanks{Received; accepted for publication; published electronically. This work was partially supported by the Spanish Research grant
MTM2015-65433-P (MINECO/FEDER), Gobierno de
Arag\'on, and Fondo Social Europeo.
\URL simax/31-3/73797.html}}
\author{Jorge Delgado\thanks{Departamento de Matem\'atica Aplicada, Universidad de Zaragoza, Escuela Universitaria Polit\'ecnica de Teruel, E-44071 Teruel, Spain (jorgedel@unizar.es).}
        \and J. M. Pe\~na\thanks{Departamento de Matem\'atica Aplicada, Universidad de Zaragoza, E-50009 Zaragoza, Spain (jmpena@unizar.es).}}
\begin{document}
\maketitle

\begin{abstract}
Totally positive matrices are related with the shape preserving representations of a space of functions. The normalized B-basis of the space has optimal shape preserving properties. B-splines and rational Bernstein bases are examples of normalized B-bases. Some results on the optimal conditioning and on extremal properties of the minimal eigenvalue and singular value of the collocation matrices of  normalized B-bases are proved. Numerical examples confirm the theoretical results and answer related questions.
\end{abstract}

\begin{keywords}
totally positive matrices, stochastic matrices, eigenvalues, singular values, conditioning, B-basis
\end{keywords}

\begin{AMS}
65F35, 65F15, 15B48, 15A12, 15A18, 65D17
\end{AMS}


\pagestyle{myheadings} \thispagestyle{plain} \markboth{JORGE
DELGADO AND J. M. PE\~{N}A}{OPTIMAL PROPERTIES OF B-BASES COLLOCATION MATRICES}

\section{Introduction}\label{sect1}

 Totally positive matrices, which are also called totally nonnegative in
the literature,  play an important role in many fields, such as approximation
theory, computer aided geometric design (CAGD), mechanics, differential or
integral equations, statistics, combinatorics, economics and biology (see
\cite{a},  \cite{fj}, \cite{g-m}, \cite{k} or \cite{pi}). A matrix is {\it totally positive} (TP) if all its minors are
nonnegative. Relevant properties of TP matrices about algebraic computations with high relative accuracy have been found recently (cf. \cite{dk,ko}). In fact, for some classes of TP matrices adequately parameterized, one can compute their eigenvalues, singular values, inverses or the solutions of some linear systems with high relative accuracy independently of their conditioning (see \cite{dk}, \cite{m-m} and \cite{dp3}). This holds for many
popular matrices, such as positive Vandermonde matrices or Hilbert
matrices, which are TP. An important source of examples of TP matrices comes from
the collocation matrices of systems of functions. Let $\cal U$ be
a vector space of real functions defined on a real interval $I$
and $(u_0(t),\ldots,u_n(t))$ ($t\in I$) be a basis of $\cal U$.
The {\it collocation matrix} of $(u_0(t),\ldots,u_n(t))$ at $t_0
<\cdots < t_m$ in $I$ is given by
\begin{equation}
    M \left( {u_0, \ldots, u_n \atop t_0, \ldots, t_m} \right):= (u_j(t_i))_{i=0, \ldots, m;j=0,\ldots, n}.
\end{equation}
The collocation matrices of a given basis are the coefficient matrices of the linear systems
associated with Lagrange interpolation problems in that
basis.

A system of functions is TP when all its collocation matrices
(1.1) are TP. 
In CAGD, the functions $u_0,\ldots ,u_n$ also satisfy
that  $\sum _{i=0}^nu_i(t)=1$ $\forall \, t\in I$ (i.e., the
system $(u_0,\ldots,u_n)$ is  {\it normalized}), and a normalized
TP system is denoted by NTP. It is known that shape preserving
representations are associated with NTP bases (see \cite{p} or
\cite{c-p}). Clearly,  the collocation matrices of NTP bases are stochastic TP matrices. By Theorem 4.2 (ii) of \cite{c-p2} (see also \cite{TPandOB,p}), given a space with an NTP basis, there exists a unique NTP basis of the space with optimal shape preserving properties, which is called the {\it normalized B-basis} of the space.
An important normalized B-basis is the Bernstein basis $(b_0^n,\ldots,b_n^n)$ of the space $\mathcal{P}_n([0,1])$ of
polynomials of degree less than or equal to $n$ on $[0,1]$, given by
\begin{equation}\label{eq:Ber}
 b_i^n(t)={n \choose i}t^i(1-t)^{n-i},\quad i=0,1,\ldots,n
\end{equation}
(see \cite{c-p}, \cite{c-p2}).
Other examples of normalized B-bases are presented at the end of Section 3 and include the important examples of B-splines and rational Bernstein bases.

In this paper, we prove that the minimal eigenvalue (and singular value) of a collocation matrix of an NTP basis is always bounded above by the minimal eigenvalue (and singular value, respectively) of the corresponding collocation matrix of the normalized B-basis of the space. The information on the minimal eigenvalue and singular value has important potential applications. For instance, here we extend the optimal conditioning for the $\infty$-norm of the Bernstein basis proved in \cite{dp2} to any normalized B-basis. On the other hand, similar results for the maximal singular value of the corresponding collocation matrices do not hold, as shown in Section 4.

The paper is organized as follows. Section 2 presents basic concepts and notations, as well as some auxiliary results for TP matrices. In particular, it recalls the characterization of stochastic TP matrices as a product of matrices associated with elementary corner cuttings. In Section 3, we prove that multiplying a nonsingular TP matrix by a matrix associated with an elementary corner cutting decreases the minimal eigenvalue and singular value and increases the $\infty$-norm condition number. This result is a key tool to prove the mentioned result on the extremal and optimal properties of the collocation matrices of a normalized B-basis. In Section 4, we include numerical examples confirming our theoretical results and counterexamples answering other related 
questions.

\section{Basic notations and auxiliary results}

By Theorem 2.6  of \cite{p} (or by Theorem 4.5 of \cite{kl}) we have 
the following characterization of a nonsingular stochastic TP matrix.
\begin{theorem}\label{thm:fact.TPyEst}
	A nonsingular $n\times n$ matrix $A$ is stochastic and TP
	if and only if it can be factorized in the form
	\[
		A=F_{n-1}F_{n-2}\cdots F_1G_1\cdots G_{n-2}G_{n-1},
	\]
	with
    \[
    	F_i = \left(\begin{matrix} 
    	1 \\ 
    	0 & 1 \\
		 &\ddots &\ddots \\
		 &  & 0 & 1 \\
		 & &    & \alpha_{i+1,1} & 1- \alpha_{i+1,1}\\
		 & & & & \ddots & \ddots \\
		   &   &  &  & &\alpha_{n,n-i} & 1- \alpha_{n,n-i}
		\end{matrix}\right)
	\] 
	and
	\[
	G_i = \left(\begin{matrix} 
	1 &0\\
	 &\ddots &\ddots \\
	   &  & 1 &0 \\
	  &    &  & 1- \alpha_{1,i+1}&\alpha_{1,i+1}\\
	 & & & &\ddots & \ddots \\
	    & &  &  & &  1-\alpha_{n-i,n} &\alpha_{n-i,n}\\
	& &  &  & & & 1
	\end{matrix}\right),
	\] 
	where, $\forall\, (i,j),$ $0\le \alpha_{i,j}<1$.
\end{theorem}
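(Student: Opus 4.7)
The $(\Leftarrow)$ direction is immediate. Each $F_i$ is a nonnegative lower bidiagonal matrix, hence TP; its rows sum to $1$, so it is stochastic; and its determinant $\prod_{k=i+1}^{n}(1-\alpha_{k,k-i})$ is positive because every $\alpha_{*,*}<1$, so it is nonsingular. The same applies to each $G_i$. Since the class of nonsingular stochastic TP matrices is closed under products (total positivity by Cauchy--Binet, stochasticity from the fact that $\mathbf{1}$ is a common right eigenvector, and nonsingularity from the multiplicativity of determinants), the proposed factorization always yields a nonsingular stochastic TP matrix.

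For the $(\Rightarrow)$ direction the main tool is Neville elimination. First I would invoke the classical bidiagonal factorization (Gasca--Pe\~na) of a nonsingular TP matrix: $A=LDU$, where $L$ is a product of $n-1$ unit-diagonal nonnegative lower bidiagonal matrices, $U$ is a product of $n-1$ unit-diagonal nonnegative upper bidiagonal matrices, and $D$ is diagonal with positive entries. The crucial structural fact, obtained by tracking the Neville pivots, is that the $k$-th outermost factor of $L$ has nontrivial entries only in rows $k+1,\dots,n$; this is exactly the nonzero pattern of $F_{n-k}$. The dual statement gives the pattern of $G_i$ in $U$. To pass from this decomposition to the claimed normalized one, I would use $A\mathbf{1}=\mathbf{1}$. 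Since each bidiagonal factor has row sums equal to its diagonal entry (because its only nontrivial off-diagonal entry sits on the subdiagonal or superdiagonal), enforcing that the whole product fixes $\mathbf{1}$ forces $D=I$ and forces the diagonal entry paired with each multiplier $\alpha$ to equal $1-\alpha$. This yields the factors $F_i$ and $G_i$ displayed in the statement.

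The bound $0\le\alpha_{i,j}$ then comes for free from nonnegativity of the Neville multipliers (a ratio of two minors of $A$ with consecutive row/column indices, both nonnegative by TP), while $\alpha_{i,j}<1$ encodes that the corresponding pivot $1-\alpha_{i,j}$ does not vanish, i.e.\ nonsingularity of a corner minor, which is guaranteed by nonsingularity of $A$. The main obstacle is the Neville-elimination step itself: establishing not merely the existence of the bidiagonal $LDU$ factorization but the precise positional pattern of its nontrivial entries. The standard route uses Sylvester's determinant identity to express each intermediate pivot as a ratio of consecutive-row, consecutive-column minors of $A$; once this structural result is granted, the stochastic normalization described above is purely algebraic.
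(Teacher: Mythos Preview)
The paper does not prove this theorem; it simply quotes it from Theorem~2.6 of \cite{p} (equivalently Theorem~4.5 of \cite{kl}), so there is no in-paper argument to compare against. Your plan is essentially the route of those references, and your $(\Leftarrow)$ direction is correct as written.

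For $(\Rightarrow)$ there is a real gap in the normalization step. In the classical Gasca--Pe\~na factorization the bidiagonal factors have \emph{unit} diagonals, so none of them fixes $\mathbf 1$ individually; the single condition $A\mathbf 1=\mathbf 1$ is one constraint on a product of many non-stochastic factors and does not by itself force $D=I$ together with the replacement of every diagonal entry by $1-\alpha$. An extra argument is needed. The cleanest route is to run Neville elimination in ``stochastic'' form: after replacing row $k$ by $\mathrm{row}_k-\alpha\,\mathrm{row}_{k-1}$ with the usual Neville multiplier $\alpha$, divide the new row by $1-\alpha$. The combined operation is exactly left multiplication by $L_k(\alpha)^{-1}$ in the notation of Remark~\ref{rem:fact.TPyEst}, and the resulting matrix is again nonsingular, TP (standard Neville step for TP matrices followed by a positive row scaling) and stochastic. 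Iterating yields the $F_i$'s directly, with no diagonal factor $D$ ever appearing.

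The point you have not addressed, and which is precisely where stochasticity enters, is why each multiplier satisfies $\alpha<1$. In the ordinary (non-stochastic) Neville factorization of a nonsingular TP matrix the multipliers may well exceed~$1$, so your sentence ``$\alpha_{i,j}<1$ encodes nonsingularity of a corner minor'' is not enough. The correct argument is: by total positivity, $a_{k-1,1}a_{k,j}\ge a_{k,1}a_{k-1,j}$ for every $j$; summing over $j$ and using $\sum_j a_{k,j}=\sum_j a_{k-1,j}=1$ gives $a_{k-1,1}\ge a_{k,1}$, hence $\alpha=a_{k,1}/a_{k-1,1}\le 1$, and equality would force rows $k-1$ and $k$ to coincide, contradicting nonsingularity. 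With this step filled in, your plan goes through.
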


The following remark provides a new factorization in terms of elementary bidiagonal matrices.

\begin{rem}\label{rem:fact.TPyEst}
	If we denote by $U_i(\lambda)$ the bidiagonal, nonsingular
	and upper triangular matrix with at most one nonzero
	off-diagonal element in the entry $(i-1,i)$
	\begin{equation}\label{eq:el.u}
		U_i(\lambda)=\left(\begin{matrix}
		1&0&&&&&\\
		&1&0&&&&\\ 
		&&\ddots &\ddots &&&\\ 
		&&&1-\lambda&\lambda&&\\
		&&&&\ddots &\ddots &\\ 
		&&&&&1&0\\ 
		&&&&&&1
		\end{matrix}\right), \quad  0\le \lambda<1
	\end{equation}
	and by $L_i(\lambda)$ the bidiagonal, nonsingular
	and lower triangular matrix with at most one nonzero
	off-diagonal element in the entry $(i,i-1)$
	\begin{equation}\label{eq:el.l}
		L_i(\lambda)=\left(\begin{matrix}
			1&&&&&&\\
			0&1&&&&&\\ 
			&\ddots &\ddots & &&&\\ 
			&&\lambda&1-\lambda&&&\\
			&&&\ddots &\ddots & &\\ 
			&&&&&1&\\ 
			&&&&&0&1
			\end{matrix}\right), \quad 0\le \lambda<1, 
	\end{equation}
	then we can write
	\[
		F_i=L_{i+1}(\alpha_{i+1,1} )\cdots L_n(\alpha_{n,n-i}) \quad\text{and}\quad 	G_i=U_n(\alpha_{n-i,n})\cdots U_{i+1}(\alpha_{1,i+1}).
    \]
\end{rem}

In Section 2 of \cite{p}, it is shown that the elementary matrices (\ref{eq:el.u}) and (\ref{eq:el.l}) have a geometric interpretation as elementary corner cutting transformations.

Now let us recall some notations, concepts and results of Linear Algebra that will be used later in order to
get a paper as self-contained as possible. Given two square matrices $A=(a_{ij})_{1\le i,j\le n}$ and $B=(b_{ij})_{1\le i,j\le n}$, we denote $A\le B$ if $a_{ij}\le b_{ij}$ for all $i,j$. We say that $A$ is {\it nonnegative} if $a_{ij}\ge 0$ for all $i,j$.  If $C=(c_{ij})_{1\le
i,j\le n}$ is a complex matrix and $A=(a_{ij})_{1\le i,j\le n}$ is a
nonnegative matrix such that $|c_{ij}|\le a_{ij}$ for all $i,j$,
then $A$ is said to {\it dominate} $C$, and so $|C|:=(|c_{ij}|)_{1\le i,j\le n}\leq A$.
The following result is due to Wienlandt (see Corollary 2.1 of Chapter
II of \cite{M}):

\begin{theorem}\label{W} Let $M$ be a nonnegative matrix  with maximal eigenvalue $r$, and let
$C$ be a complex matrix dominated by $M$. Then $r=\rho (M)\ge \rho (C)$.
\end{theorem}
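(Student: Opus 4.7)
The plan is to follow the standard route via the triangle inequality plus a Collatz--Wielandt type comparison. Let $\mu$ be an eigenvalue of $C$ with $|\mu|=\rho(C)$ and let $x\in\mathbb{C}^n\setminus\{0\}$ satisfy $Cx=\mu x$. The first step is to pass to the absolute values: since $|c_{ij}|\le a_{ij}$ (here $A=M$ with our notation) and $|\mu||x_i|=|\sum_j c_{ij}x_j|\le \sum_j|c_{ij}||x_j|\le \sum_j m_{ij}|x_j|$, one obtains the componentwise inequality
\[
|\mu|\,|x|\le M\,|x|,
\]
with $|x|\ge 0$ and $|x|\ne 0$.

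The second step is to deduce $|\mu|\le \rho(M)$ from such an inequality. I would do this by iterating: since $M$ is nonnegative, multiplying the inequality by $M$ preserves it, so by induction $|\mu|^k|x|\le M^k|x|$ for every $k\ge 1$. Taking any monotone vector norm $\|\cdot\|$ and using $\|M^k|x|\|\le \|M^k\|\,\||x|\|$ together with Gelfand's formula $\rho(M)=\lim_{k\to\infty}\|M^k\|^{1/k}$ gives $|\mu|\le \rho(M)$. Since $M$ is nonnegative, by Perron--Frobenius $\rho(M)=r$ is actually attained as the maximal eigenvalue of $M$, so $\rho(C)=|\mu|\le r=\rho(M)$, which is the claim.

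The main obstacle is really only conceptual at step two: one must be careful that $|x|\ne 0$ (immediate from $x\ne 0$) and that the comparison $|\mu|^k|x|\le M^k|x|$ can be transferred to spectral radii without assuming that $|x|$ is an eigenvector of $M$ or that $M$ is irreducible. The monotone-norm/Gelfand argument sidesteps both issues and does not require any decomposition of $M$. Alternatively, one could invoke the Collatz--Wielandt characterization $\rho(M)=\max\{\lambda\ge 0:\exists\,y\ge 0,\ y\ne 0,\ \lambda y\le My\}$ and apply it directly with $\lambda=|\mu|$ and $y=|x|$; this is the quickest finish once the componentwise inequality is in place.
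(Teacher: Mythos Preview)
Your argument is correct. The componentwise inequality $|\mu|\,|x|\le M\,|x|$ is derived cleanly, and the passage to $|\mu|\le\rho(M)$ via iteration and Gelfand's formula is valid without any irreducibility assumption on $M$; the alternative Collatz--Wielandt finish is also fine, since for any nonnegative $M$ the spectral radius is attained by a nonnegative eigenvector, so the supremum in that characterization is a maximum.

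As for comparison with the paper: the paper does not actually prove this statement. It is quoted as a classical result of Wielandt, with a reference to Corollary~2.1 of Chapter~II of Minc's \emph{Nonnegative Matrices}, and is used as a black box in the proof of Theorem~\ref{lem:aux}. Your write-up therefore supplies a short self-contained proof where the paper simply cites the literature; the argument you give is essentially the standard one found in textbook treatments of Perron--Frobenius theory.
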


The following result collects two properties of TP matrices which will be used in the proofs of the main results. The first part corresponds to Corollary 6.6 of \cite{a} and the second part to
Theorem 3.3 of \cite{a}.

\begin{theorem}\label{TP} 
Let $A$ be a nonsingular TP $n\times n$ matrix. Then:
\begin{itemize}
 \item[(i)] All the eigenvalues of $A$ are positive.
 \item[(ii)] Given the $n\times n$ diagonal matrix 
 \begin{equation}\label{eq:j}
 J:=\hbox{diag}
(1,-1,1,\ldots ,(-1)^{n-1}),
\end{equation}
 the matrix $JA^{-1}J$ is TP.
 \end{itemize}
\end{theorem}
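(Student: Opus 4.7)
The plan is to prove the two parts by different classical techniques: the compound-matrix / Perron--Frobenius machinery for (i), and Jacobi's complementary-minor identity for (ii).

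For part (i), I would introduce the $k$-th multiplicative compound $C_k(A)$, whose entries are the $k\times k$ minors of $A$ arranged in lexicographic order of row and column index sets. Since $A$ is TP, all these minors are nonnegative, so $C_k(A)$ is a nonnegative matrix. By the Binet--Cauchy theorem, the eigenvalues of $C_k(A)$ are exactly the $k$-fold products $\lambda_{i_1}\cdots\lambda_{i_k}$ of eigenvalues of $A$. Applying Perron--Frobenius to each $C_k(A)$ yields a real nonnegative eigenvalue equal to its spectral radius; ordering the eigenvalues of $A$ by decreasing modulus $|\lambda_1|\ge\cdots\ge|\lambda_n|$, this forces $|\lambda_1\cdots\lambda_k|$ to be attained as a nonnegative eigenvalue of the compound. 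An induction on $k$ using the successive ratios $\rho(C_k(A))/\rho(C_{k-1}(A))$ then shows every $\lambda_i$ is real and nonnegative. Nonsingularity of $A$ excludes zero eigenvalues.

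For part (ii), I would invoke Jacobi's identity: for any nonsingular $A$ and index subsets $I,K\subset\{1,\ldots,n\}$ of the same cardinality $k$,
\[
\det\bigl(A^{-1}[I\mid K]\bigr) \;=\; (-1)^{\sigma(I)+\sigma(K)}\,\frac{\det\bigl(A[K^c\mid I^c]\bigr)}{\det A},
\]
where $\sigma(I)=\sum_{i\in I} i$ and $I^c$ is the complement in $\{1,\ldots,n\}$. Since $J$ is diagonal with $J_{ii}=(-1)^{i-1}$, a minor of $JA^{-1}J$ equals $\epsilon_I\epsilon_K\det(A^{-1}[I\mid K])$, where $\epsilon_I=\prod_{i\in I}(-1)^{i-1}=(-1)^{\sigma(I)-k}$. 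The diagonal sign product $\epsilon_I\epsilon_K=(-1)^{\sigma(I)+\sigma(K)-2k}=(-1)^{\sigma(I)+\sigma(K)}$ exactly cancels the Jacobi sign, leaving
\[
\det\bigl((JA^{-1}J)[I\mid K]\bigr) \;=\; \frac{\det\bigl(A[K^c\mid I^c]\bigr)}{\det A}.
\]
Because $A$ is nonsingular TP, $\det A>0$ and every minor of $A$ is nonnegative, so every minor of $JA^{-1}J$ is nonnegative; hence $JA^{-1}J$ is TP.

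The main obstacle I expect is in (i): rigorously extracting from the Perron--Frobenius eigenvalues of the successive compounds $C_k(A)$ the conclusion that each individual $\lambda_i$ is real and nonnegative, rather than merely that certain $k$-fold products are. This is the classical Gantmacher--Krein step, and it may be cleanest in a first pass to approximate $A$ by a strictly TP matrix (which has simple, distinct positive eigenvalues) and then pass to the limit, so as to avoid delicate bookkeeping when moduli of distinct eigenvalues coincide. Part (ii), by contrast, is essentially a two-line sign computation once Jacobi's identity is in hand.
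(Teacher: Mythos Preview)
The paper does not give its own proof of this theorem: it quotes the two parts as Corollary~6.6 and Theorem~3.3 of Ando~\cite{a} and uses them as black boxes. So there is no in-paper argument to compare against; what you have written is, in outline, the standard classical route (Gantmacher--Krein compounds plus Perron--Frobenius for~(i), Jacobi's complementary-minor identity for~(ii)) and is correct in spirit.

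Two small remarks. For~(i), the step ``$\rho(C_k(A))/\rho(C_{k-1}(A))$ recovers the $k$-th eigenvalue'' is only clean when the moduli $|\lambda_1|>\cdots>|\lambda_n|$ are strictly decreasing; otherwise the Perron eigenvalue of $C_k(A)$ need not be the specific product $\lambda_1\cdots\lambda_k$ but merely some $k$-fold product of the same modulus. Your proposed fix---approximate $A$ by strictly TP matrices (which do have simple positive eigenvalues), apply the clean argument there, and pass to the limit---is exactly how the classical proof handles this, so you should commit to that route rather than leave it as an option. For~(ii), your sign bookkeeping is correct and the use of $\det A>0$ (which follows since $A$ is TP and nonsingular) is the only additional ingredient needed; this is essentially Ando's argument as well.
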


Given a nonsingular matrix $A$, for $p=1,2,\infty$ we shall use the condition numbers $\kappa _p(A):=\Vert A\Vert _p\, \Vert A^{-1}\Vert _p$.

\section{Main results}\label{sec:main}


The following theorem shows that the elementary matrices corresponding to elementary corner cuttings decrease the minimal singular value and the minimal eigenvalue and increase some condition numbers when they multiply a TP matrix to its right or when their transposes multiply a TP matrix to its left.

\begin{theorem}\label{lem:aux}
Let $M$ be a nonsingular TP matrix,  $A:=ME$
and $C:=E^TM$ with $E=U_i(\lambda)$ or $E=L_i(\lambda)$ an elementary matrix given by 
\eqref{eq:el.u} and \eqref{eq:el.l}, respectively, for $0\le\lambda <1$.
Then the following properties hold:
\begin{itemize}
 \item[(i)] $|A^{-1}|$ and $|C^{-1}|$ dominate $M^{-1}$.
 \item[(ii)] The minimal eigenvalue of $A$ and $C$ are bounded above by the minimal
eigenvalue of $M$.
 \item[(iii)] The minimal singular value of $A$ and $C$ are bounded above by the minimal
singular value of $M$.
 \item[(iv)] $\kappa _\infty (M)\le \kappa _\infty (A)$ and $\kappa _1(M)\le \kappa _1 (C)$
  \end{itemize}
\end{theorem}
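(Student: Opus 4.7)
Parts (ii), (iii), and (iv) will all follow cleanly from part (i) together with a few standard matrix-analytic tools, so the bulk of the work is in (i). I focus on the case $A=ME$ with $E=U_i(\lambda)$; the case $E=L_i(\lambda)$ is completely parallel (a single row of $A^{-1}$ is affected in the same controlled way), and both $C=E^T M$ cases reduce to an $A$-case by taking transposes and using that $M^T$ is TP whenever $M$ is. The key tool is Theorem \ref{TP}(ii): since $JM^{-1}J$ is TP and hence nonnegative, $M^{-1}$ exhibits the checkerboard sign pattern $(-1)^{i+j}(M^{-1})_{ij}\ge 0$, equivalently $|M^{-1}|=JM^{-1}J$. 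Because $A=ME$ is a product of nonsingular TP matrices, the identity $|A^{-1}|=JA^{-1}J$ holds as well.

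\textbf{Main step: proof of (i).} By Remark \ref{rem:fact.TPyEst}, $E^{-1}$ equals the identity off row $i-1$, with diagonal entry $1/(1-\lambda)$ and $(i-1,i)$-entry $-\lambda/(1-\lambda)$. Consequently, in $A^{-1}=E^{-1}M^{-1}$ only row $i-1$ changes:
\[
(A^{-1})_{i-1,k}=\tfrac{1}{1-\lambda}(M^{-1})_{i-1,k}-\tfrac{\lambda}{1-\lambda}(M^{-1})_{i,k}.
\]
The crucial observation --- and the main obstacle in the whole proof --- is that the checkerboard sign pattern forces rows $i-1$ and $i$ of $M^{-1}$ to carry opposite signs column by column, so the subtraction above becomes an addition in absolute value:
\[
|(A^{-1})_{i-1,k}|=\tfrac{1}{1-\lambda}\bigl(|(M^{-1})_{i-1,k}|+\lambda\,|(M^{-1})_{i,k}|\bigr)\ge |(M^{-1})_{i-1,k}|.
\]
All other rows of $A^{-1}$ coincide with those of $M^{-1}$, so $|A^{-1}|\ge |M^{-1}|$ entrywise.

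\textbf{Parts (ii) and (iii).} By Theorem \ref{TP}(i), $A$ and $M$ have only positive eigenvalues, so $\rho(A^{-1})=1/\lambda_{\min}(A)$ and likewise for $M$. Since $J$ is diagonal with $\pm 1$ entries, hence unitary, the identity $|A^{-1}|=JA^{-1}J$ shows $|A^{-1}|$ is similar to $A^{-1}$; therefore $\rho(|A^{-1}|)=\rho(A^{-1})$ and $\||A^{-1}|\|_2=\|A^{-1}\|_2$, and similarly for $M$. Applying Wielandt's theorem (Theorem \ref{W}) to the entrywise domination $|M^{-1}|\le |A^{-1}|$ from (i) gives $\rho(M^{-1})\le \rho(|A^{-1}|)=\rho(A^{-1})$, which on inversion yields (ii). For (iii), the pointwise inequality $|M^{-1}|^T|M^{-1}|\le |A^{-1}|^T|A^{-1}|$ combined with Wielandt on the Gram matrices yields $\||M^{-1}|\|_2\le \||A^{-1}|\|_2$, and then $\|M^{-1}\|_2\le \||M^{-1}|\|_2\le \||A^{-1}|\|_2=\|A^{-1}\|_2$ gives $\sigma_{\min}(M)\ge \sigma_{\min}(A)$.

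\textbf{Part (iv).} Each elementary $E$ in \eqref{eq:el.u}--\eqref{eq:el.l} is row-stochastic ($E\mathbf{1}=\mathbf{1}$), so $A\mathbf{1}=ME\mathbf{1}=M\mathbf{1}$; as $A$ and $M$ are nonnegative, their maximum row sums coincide, i.e.\ $\|A\|_\infty=\|M\|_\infty$. Part (i) gives $\|A^{-1}\|_\infty\ge \|M^{-1}\|_\infty$ by monotonicity of maximum row sums for nonnegative matrices, and multiplying yields $\kappa_\infty(M)\le \kappa_\infty(A)$. The inequality $\kappa_1(M)\le \kappa_1(C)$ is obtained by the transpose version of the same argument, using that $E^T$ preserves the column sums of $M$.
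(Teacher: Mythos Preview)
Your proof is correct and follows essentially the same line as the paper. Part (i) in the paper is phrased in matrix form---computing $JA^{-1}J=(JE^{-1}J)(JM^{-1}J)$ and observing that $JE^{-1}J$ is nonnegative with the relevant diagonal entry $1/(1-\lambda)\ge 1$---while you unpack the same computation entry by entry via the checkerboard sign pattern; these are the same argument. Parts (ii) and (iv) match the paper almost verbatim.

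The one genuine, if minor, divergence is in (iii). The paper applies (ii) twice: first to the nonsingular TP matrix $M^TM$ to compare $\lambda_{\min}(M^TM)$ with $\lambda_{\min}(M^TME)$, then again (for the $C$-variant) to compare with $\lambda_{\min}(E^TM^TME)$. You instead use the entrywise inequality $|M^{-1}|\le |A^{-1}|$ from (i) directly, pass to Gram matrices to get $|M^{-1}|^T|M^{-1}|\le |A^{-1}|^T|A^{-1}|$, apply Wielandt, and then exploit the unitary similarity $|A^{-1}|=JA^{-1}J$ to identify $\||A^{-1}|\|_2$ with $\|A^{-1}\|_2$. Your route is slightly more self-contained (it does not need the intermediate matrix $M^TME$ to be TP), while the paper's route reuses (ii) cleanly; both are short and equally effective.
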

\begin{proof}
Since $E$ is obviously TP and $M$ is also TP, we deduce from Theorem 3.1 of \cite{a} that the products $A=ME$ and $C=E^TM$ are also TP, and they also inherit the nonsingularity of $M$ and $E$. 
If $J$ is the diagonal matrix given by (\ref{eq:j}), since $A$, $C$ and $M$ are TP nonsingular, by Theorem \ref{TP} (ii), $JA^{-1}J$, $JC^{-1}J$ and $JM^{-1}J$ are TP and so, in particular, nonnegative and 
\begin{equation}\label{eq:vabs}
JA^{-1}J=|A^{-1}|, \quad JC^{-1}J=|C^{-1}|. 
\end{equation}
Besides, $JA^{-1}J$, $JC^{-1}J$ and $JM^{-1}J$ are similar to $A^{-1}$, $C^{-1}$ and
 $M^{-1}$, respectively.

(i) Taking into account 
that $J=J^{-1}$ we derive  
\[
JA^{-1}J=J(ME)^{-1}J=(JE^{-1}J)(JM^{-1}J).
\]
 So,
in order to prove that  $|A^{-1}|$ dominates $M^{-1}$, it is sufficient by (\ref{eq:vabs}) to see that
\begin{equation}\label{eq:aux.lem}
JM^{-1}J\leq JA^{-1}J = (JE^{-1}J)(JM^{-1}J).
\end{equation}
We can observe that the matrix $JE^{-1}J$ is also nonnegative. In addition,
$JE^{-1}J$ has one of the two following forms:
\begin{equation}\label{eq:inv.Jel}
\begin{pmatrix}
1&0&&&&&\\
&1&0&&&&\\ 
&&\ddots &\ddots &&&\\ 
&&&\frac{1}{1-\lambda }&\frac{\lambda}{1-\lambda}&&\\
&&&&\ddots &\ddots &\\ 
&&&&&1&0\\ 
&&&&&&1
\end{pmatrix} \ \  \text{or} \ \
\begin{pmatrix}
1&&&&&&\\
0&1&&&&&\\ &\ddots &\ddots & &&&\\ &&\frac{\lambda}{1-\lambda}&\frac{1}{1-\lambda }&&&\\
&&&\ddots &\ddots & &\\ &&&&&1&\\ &&&&&0&1
\end{pmatrix}, 
\end{equation}
with $0\leq\lambda<1$.
Taking into account the previous formula, that $JM^{-1}J$ is nonnegative
and that $1/(1-\lambda)\ge 1$, it can be deduced
that $(JE^{-1}J)(JM^{-1}J)\ge JM^{-1}J$ and formula \eqref{eq:aux.lem} holds, and so $|A^{-1}|$ dominates $M^{-1}$. Since $C^T=M^TE$, we can deduce that $|(C^T)^{-1}|=|(C^{-1})^T|$ dominates $(M^T)^{-1}=(M^{-1})^T$ and so $|C^{-1}|$ dominates $M^{-1}$, and (i) holds. 

(ii) By Theorem \ref{TP} (i), the eigenvalues of $A$ are positive. By (\ref{eq:vabs}), (i) and Theorem \ref{W}, we derive
\begin{equation}\label{eq:equiv}
	\rho(JA^{-1}J)\ge \rho(JM^{-1}J)
\end{equation}
and, since $JA^{-1}J$ and $JM^{-1}J$ are similar to $A^{-1}$ and $M^{-1}$ (respectively), the minimal eigenvalue of $A$ is bounded above by the minimal
eigenvalue of $M$. Using again that $C^T=M^TE$ and that the eigenvalues
do not change when transposing a matrix, we also conclude that the minimal eigenvalue of $C$ is bounded above by the minimal
eigenvalue of $M$, and (ii) holds.

(iii) The minimal singular values of $M$ and $A=ME$ are the minimal eigenvalues of $M^TM$ and $E^TM^TME$, respectively. By Theorem 3.1 of \cite{a}, the product $M^TM$ is TP. Then, by (ii), the minimal eigenvalue of $M^TM$ is greater than or equal to the minimal eigenvalue of $M^TME$. Applying (i) again, the minimal eigenvalue of $M^TME$ is greater than or equal to the minimal eigenvalue of $E^TM^TME$. In conclusion, the minimal singular value of $A=ME$ is bounded above by the minimal
singular value of $M$. Taking into account that $C^T=M^TE$ and that the singular values do not change when transposing a matrix,  we also have that the minimal singular value of $C$ is bounded above by the minimal
singular value of $M$, and (iii) holds.

(iv) From (i), we derive $\Vert M^{-1}\Vert _\infty \le \Vert A^{-1}\Vert _\infty $. Since $A$ and $M$ are TP, they are nonnegative. Since $E$ is stochastic, if we denote $e:=(1,\ldots,1)^T$, then we have 
$\Vert A\Vert _\infty =\Vert Ae\Vert _\infty=\Vert MEe\Vert _\infty=\Vert Me\Vert _\infty=\Vert M\Vert _\infty$. Therefore $\kappa _\infty (M)\le \kappa _\infty (A)$. Finally, we deduce that $\kappa _1(C)=\kappa _\infty (C^T)=\kappa _\infty (M^TE)\ge \kappa _\infty (M^T)=\kappa _1(M)$, and the result follows.
\end{proof}

The following corollary shows that any nonsingular stochastic TP matrix produces the same effects as those described in Theorem \ref{lem:aux} for the elementary matrices corresponding to elementary corner cuttings when multiplying TP matrices.

\begin{corollary}\label{cor:aux}
Let $M$ be a nonsingular TP matrix, $K$ a nonsingular stochastic TP matrix, $A:=MK$ and $C:=K^TM$. Then the following properties hold:
\begin{itemize}
 \item[(i)] $|A^{-1}|$ and $|C^{-1}|$ dominate $M^{-1}$.
 \item[(ii)] The minimal eigenvalue of $A$ and $C$ are bounded above by the minimal
eigenvalue of $M$.
 \item[(iii)] The minimal singular value of $A$ and $C$ are bounded above by the minimal
singular value of $M$.
 \item[(iv)] $\kappa _\infty (M)\le \kappa _\infty (A)$ and $\kappa _1(M)\le \kappa _1 (C)$
  \end{itemize}
\end{corollary}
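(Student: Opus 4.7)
The plan is to reduce Corollary \ref{cor:aux} to Theorem \ref{lem:aux} by factorizing $K$ into elementary matrices. Specifically, Theorem \ref{thm:fact.TPyEst} together with Remark \ref{rem:fact.TPyEst} show that any nonsingular stochastic TP matrix $K$ admits a factorization $K = E_1 E_2 \cdots E_r$ in which every factor $E_j$ is of the form $U_i(\lambda)$ or $L_i(\lambda)$ with $0 \le \lambda < 1$, that is, exactly the class of elementary matrices to which Theorem \ref{lem:aux} applies.

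Given this factorization, I would introduce the intermediate products $M_0 := M$ and $M_j := M_{j-1} E_j = M E_1 \cdots E_j$ for $j = 1, \ldots, r$, so that $M_r = A$. Since nonsingular TP matrices are closed under multiplication (Theorem 3.1 of \cite{a}), each $M_j$ is nonsingular and TP, and Theorem \ref{lem:aux} applies to every pair $(M_{j-1}, E_j)$. Chaining the resulting inequalities gives the four conclusions for $A$: the pointwise dominations $|M_j^{-1}| \ge |M_{j-1}^{-1}|$ (noting that each $|M_{j-1}^{-1}|$ is itself nonnegative so the relation is transitive) chain to $|A^{-1}| \ge |M^{-1}|$, which is (i); the inequalities asserting that the minimal eigenvalue and the minimal singular value of $M_j$ do not exceed those of $M_{j-1}$ chain to yield (ii) and (iii); and $\kappa_\infty(M_{j-1}) \le \kappa_\infty(M_j)$ chains to $\kappa_\infty(M) \le \kappa_\infty(A)$, which is (iv).

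For the assertion about $C = K^T M$, I would use the induced factorization $K^T = E_r^T E_{r-1}^T \cdots E_1^T$ and define analogously $C_0 := M$ and $C_j := E_j^T C_{j-1}$, so that $C_r = C$. Since transposition preserves total positivity and nonsingularity, each $C_j$ is again nonsingular and TP, and the parts of Theorem \ref{lem:aux} concerning multiplication on the left by $E^T$ apply to each pair $(C_{j-1}, E_j)$. The identical chaining argument then yields the four conclusions for $C$. No serious obstacle is to be expected here: the factorization supplied by Theorem \ref{thm:fact.TPyEst} and Remark \ref{rem:fact.TPyEst} is tailor-made for this corollary, so the proof reduces to the bookkeeping task of verifying that each of the four monotonicity statements is stable under iteration, which is immediate in each case.
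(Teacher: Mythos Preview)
Your proposal is correct and follows essentially the same approach as the paper: factorize $K$ into elementary bidiagonal matrices via Theorem~\ref{thm:fact.TPyEst} and Remark~\ref{rem:fact.TPyEst}, then apply Theorem~\ref{lem:aux} iteratively along the chain of partial products. Your write-up is, if anything, a bit more explicit than the paper's about the intermediate matrices and the transitivity of each of the four monotonicity statements.
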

\begin{proof}
By Theorem \ref{thm:fact.TPyEst} and Remark \ref{rem:fact.TPyEst} 
we deduce that $K=\prod_{i=1}^r E_i$,
where $r$ is a positive integer and each $E_i$ is equal to $U_j(\lambda_i)$
or $L_j(\lambda_i)$ given by \eqref{eq:el.u} and \eqref{eq:el.l}, respectively,
for $0\leq\lambda_i<1$. Therefore, we get that
\[
	A=M\left(\prod_{i=1}^r E_i\right),
\]
with $E_i=U_j(\lambda_i)$ or $L_j(\lambda_i)$ for $0\leq\lambda_i<1$ and $i\in\{1,\ldots,r\}$.
So, applying in an iterative way Theorem \ref{lem:aux} to the previous formula,
the result follows for $A$.

Analogously, since $C=(E_r^T\cdots E_1^T)M$ with each $E_i$ 
a matrix of the form \eqref{eq:el.u} or \eqref{eq:el.l},
we can apply Theorem \ref{lem:aux} in an iterative way to prove the
result for $C$.
\end{proof}

The next corollary applies previous results to deduce some extremal and optimal properties of the collocation matrices of the normalized B-basis of a space.

\begin{corollary}\label{cor:eig}
Let $u=(u_0,\ldots,u_n)$ be an NTP basis on $[a,b]$ of a space of functions $\mathcal{U}$ and 
let $v=(b_0,\ldots,b_n)$ the normalized B-basis of $\mathcal{U}$. If we consider an increasing sequence of 
nodes $\mathbf{t}=(t_i)_{i=0}^n$ on $[a,b]$, let us denote by $A$ to the collocation matrix of $u$ at 
$\mathbf{t}$ and by $M$ to the collocation matrix of $v$ at $\mathbf{t}$.  
Then the minimal 
eigenvalue and singular value of $M$ are greater than or equal to the minimal eigenvalue and singular value of $A$, respectively.
Moreover, if $A$ and $M$ are nonsingular, then 
$\kappa_1(M^T)=\kappa _\infty (M)\le \kappa _\infty (A)=\kappa_1(A^T)$.
\end{corollary}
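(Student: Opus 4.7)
The key idea is to realize the matrix $A$ as a product $MK$, where $K$ is a nonsingular stochastic TP matrix, and then invoke Corollary \ref{cor:aux}. This reduces the entire corollary to a change-of-basis statement plus an application of the previous results.

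First I would invoke the characterization of the normalized B-basis: since $v=(b_0,\ldots,b_n)$ is the normalized B-basis of $\mathcal{U}$ and $u=(u_0,\ldots,u_n)$ is any NTP basis of $\mathcal{U}$, there exists a stochastic TP change-of-basis matrix $K=(k_{ij})$ such that $u_j=\sum_{i=0}^{n} k_{ij}\,b_i$ for every $j$. (This is the optimal shape preserving property of the B-basis, which was recalled in the introduction via Theorem 4.2(ii) of \cite{c-p2}; see also \cite{p}, \cite{c-p}.) Evaluating this relation at the nodes $t_0<\cdots<t_n$ yields immediately $A=MK$.

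Second, I would observe that $A$ and $M=AK^{-1}\cdot K\cdot K^{-1}\!\cdots$ share the same rank because $K$, being a change-of-basis matrix between two bases of $\mathcal{U}$, is nonsingular. Hence $A$ is nonsingular if and only if $M$ is. In the singular case both minimal eigenvalues and both minimal singular values equal zero (and, by Theorem \ref{TP}(i) applied to the NTP factorizations, all eigenvalues are nonnegative), so the inequalities hold trivially. In the nonsingular case $K$ is itself a nonsingular stochastic TP matrix, and Corollary \ref{cor:aux} applied to the factorization $A=MK$ gives directly that the minimal eigenvalue of $A$ is bounded above by the minimal eigenvalue of $M$, that the minimal singular value of $A$ is bounded above by the minimal singular value of $M$, and that $\kappa_\infty(M)\le\kappa_\infty(A)$.

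Finally, for the very last equality $\kappa_1(M^T)=\kappa_\infty(M)$ (and analogously for $A$), I would simply use the standard duality $\|X^T\|_1=\|X\|_\infty$ together with $(X^T)^{-1}=(X^{-1})^T$. The main (essentially only) obstacle is ensuring that the correct form of the B-basis characterization is cited so that $K$ is guaranteed stochastic TP; everything else is a one-line reduction to Corollary \ref{cor:aux}.
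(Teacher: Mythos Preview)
Your proposal is correct and follows essentially the same route as the paper: obtain a nonsingular stochastic TP change-of-basis matrix $K$ from the B-basis characterization (Theorem~4.2(ii) of \cite{c-p2}), write $A=MK$, dispose of the singular case trivially, and apply Corollary~\ref{cor:aux} in the nonsingular case. The only addition you make is spelling out the standard duality $\kappa_1(X^T)=\kappa_\infty(X)$, which the paper leaves implicit.
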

\begin{proof}
Since $v$ is the normalized B-basis of $\mathcal{U}$ and $u$ an NTP basis, by Theorem 4.2 (ii) of \cite{c-p2}, we have that there exists a nonsingular TP
stochastic matrix $K$ such that
\[
	(u_0,\ldots,u_n)=(b_0,\ldots,b_n)K.
\]
Taking collocation matrices in the previous expression at $\mathbf{t}$ we have that 
\begin{equation}\label{eq:rel.eig}
	A=MK.
\end{equation}
Since the bases $u$ and $v$ are NTP, $A$ and $M$ are stochastic and TP. 
If $A$ (or equivalently $M$) is singular, then the minimal eigenvalue and singular value of both matrices are equal to 0.  Otherwise, 
 the result follows from (\ref{eq:rel.eig}) and from (ii), (iii) and (iv) of Corollary \ref{cor:aux}. 
\end{proof}

We now give a list of examples of important normalized B-bases. By the previous result, their collocation matrices satisfy the mentioned extremal and optimal properties.

\begin{exs}\label{exs:NBb}
	\begin{itemize}
		
		\item[(a)] The space of polynomials of degree at most $n$ on a compact interval $[a,b]$,
			$\mathcal{P}_n([a,b])$, has the normalized B-basis given by $(b_0^n\,\ldots,b_n^n)$ 
			with 
			\[
				b_i^n(t;a,b)={n\choose i}\frac{(b-t)^{n-i}(t-a)^i}{(b-a)^n},\quad i=0,1\ldots,n
			\]
	(see \cite{c-p2,f-r} and Section 4 of \cite{c-p}).
		
		\item[(b)] Let us consider a sequence $(w_i)_{0\leq i\leq n}$ of positive weights. Then
		the system of functions $(r_0^n,\ldots,r_n^n)$ defined on the compact interval $[a,b]$ by
		\[
			r_i^n(t)=\frac{w_ib_i^n(t;a,b)}{\sum_{j=0}^n w_j b_j^n(t;a,b)},\quad i=0,1,\ldots,n,
		\]
		is the normalized B-basis of the corresponding spanned space of functions (see Example 4.14 of \cite{p}), and is called the rational Bernstein basis of its space. Observe that, if all weights $w_i=1$ for all $i$, then $(r_0^n,\ldots,r_n^n)=(b_0^n\,\ldots,b_n^n)$ is the Bernstein basis on $[a,b]$.
		
		\item[(c)] The space of even trigonometric functions given by
		\[
			\mathcal{C}_n=span\{1,\cos\,t,\cos\,2t,\ldots,\cos\,nt\}
		\]
		on the compact interval $[0,\pi]$ has the normalized B-basis 
		$(u_0^n,\ldots,u_n^n)$ given by 
		\[
			u_i^n(t)={n\choose i}\cos^{2(n-i)}(t/2)\sin^{2i}(t/2),\quad i=0,1,\ldots,n
		\]
(see \cite{trig}).
		
		\item[(d)] The space of trigonometric polynomials
		\[
			\mathcal{T}_n=\{1,\cos t,\sin t,\cos 2t,\sin 2t,\ldots,\cos nt,\sin nt\}
		\]
		on $I=[-A,A]$ with $A<\frac{\pi}{2}$ has the normalized B-basis $(v_0,\ldots,v_m)$, $m=2n$,
		defined by
		\[
			v_i(t)=d_i\left(\frac{\sin\left(\frac{A+t}{2}\right)}{\sin A}\right)^i
			\left(\frac{\sin\left(\frac{A-t}{2}\right)}{\sin A}\right)^{m-i},\quad i=0,1,\ldots,m
		\]
		with
		\[
			d_i=\sum_{k=0}^{[i/2]}{m/2 \choose i-k}{i-k\choose k}(2\cos A)^{i-2k},\quad i=0,1,\ldots,m
		\]
		(see Section 3 of \cite{sr}).
		
		\item[(e)]  A very important example is the case of B-spline bases (see \cite{s}) and NURBS. Let us consider
		a sequence of positive weights $(w_i)_{0\leq i\leq n}$ and a knots vector $(t_0,\ldots,t_{n+d})$
		with $t_i\leq t_{i+1}$ for all $i=0,1,\ldots,n+d-1$. Then the B-spline basis 
		$(N_{0,d},N_{1,d},\ldots,N_{n,d})$ defined
		over the previous knots vector by
		\begin{align*}
		N_{i,0}(t)&=\left\{
			\begin{array}{ll}
				1,& \text{if } t_i\leq t<t_{i+1}, \\
				0,& \text{otherwise,}
			\end{array}
		\right. \\
		N_{i,k}(t)&=\frac{t-t_i}{t_{i+k}-t_i}N_{i,k-1}(t)+\frac{t_{i+k+1}-t}{t_{i+k+1}-t_{i+1}}N_{i+1,k-1}(t),\quad k=1,\ldots,d,
		\end{align*}
		is the normalized B-basis of the corresponding splines space (see \cite{c-p2}). The basis
		$(r_0,\ldots,r_n)$ defined by
		\[
			r_i(t)=\frac{w_iN_{i,d}(t)}{\sum_{j=0}^n  w_jN_{j,d}(t)},\quad i=0,1\ldots,n,
		\]
		is the normalized B-basis of the corresponding NURBS space (see Section 4 of \cite{c-p2}).
	\end{itemize}
\end{exs}

\section{Numerical experiments and further questions}

For the construction of the numerical examples we shall consider three different
TP bases $u=(u_0^n,\ldots,u_n^n)$ of $\mathcal{P}_n([0,1])$. For each of the
bases, given a sequence of positive weights $(w_i)_{i=0}^n$, we can construct a rational NTP basis $(r_0^n,\ldots,r_n^n)$ defined by
\begin{equation}\label{eq:}
	r_i^n(t)=\frac{w_iu_i^n(t)}{\sum_{j=0}^n w_ju_j^n(t)},\quad i=0,1,\ldots,n.
\end{equation}
In fact, it is straightforward to check that, if $u$ is TP, then 
$(r_0^n,\ldots,r_n^n)$ is NTP. 
In the case that $u=(b_0^n,\ldots,b_n^n)$ is the 
normalized B-basis of the space $\mathcal{P}_n([0,1])$  given in  Examples \ref{exs:NBb} (a) for $a=0$ and $b=1$ (see \eqref{eq:Ber}),
then it is well known that the corresponding rational Bernstein basis $r_B=(r_0^n,\ldots,r_n^n)$ is the 
normalized B-basis of its spanned space $\left\langle r_B\right\rangle$ (see Examples \ref{exs:NBb} (b)).

Now, let us consider
the Said-Ball basis $s=(s_0^n,\ldots,s_n^n)$ 
(for more details see \cite{SB} and the references therein) given by 
\begin{align*}
	s_i^n(t)&={\lfloor n/2\rfloor + i\choose i}t^i(1-t)^{\lfloor n/2\rfloor +1},\quad 0\leq i\leq \lfloor (n-1)/2\rfloor, \\
	s_i^n(t)&={\lfloor n/2\rfloor + n-i\choose n-i}t^{\lfloor n/2\rfloor +1}(1-t)^{n-i},\quad \lfloor n/2\rfloor+1\leq i\leq n,
\end{align*}
and, if $n$ is even
\[
	s_{n/2}^n(t)={n\choose n/2}t^{n/2}(1-t)^{n/2},
\]
where $\lfloor m\rfloor$ is the greatest integer less than or equal to $m$. In \cite{SB} 
it was proved that the Said-Ball basis is NTP.
In the case that $u=(s_0^n,\ldots,s_n^n)$, the corresponding NTP basis $r_{SB}=(r_0^n,\ldots,r_n^n)$, constructed as in \eqref{eq:},
will be called rational Said-Ball basis.

Finally, let us consider the DP basis $c=(c_0^n,\ldots,c_n^n)$ of $\mathcal{P}_n([0,1])$ given by (see \cite{d-p})
\begin{align*}
	c_0^n(t)&=(1-t)^n, \\
	c_i^n(t)&=t(1-t)^{n-i},\quad 1\leq i\leq \lfloor n/2\rfloor -1, \\
	c_i^n(t)&=t^i(1-t),\quad \lfloor (n+1)/2\rfloor+1 \leq i\leq n-1, \\
	c_n^n(t)&=t^n,
\end{align*}
and, if $n$ is even
\[
	c_{\frac{n}{2}}^n(t)=1-t^{\frac{n}{2}+1}-(1-t)^{\frac{n}{2}+1},
\]
and, if $n$ is odd,
\begin{align*}
	c_{\frac{n-1}{2}}^n(t)&=t(1-t)^{\frac{n+1}{2}}+\frac{1}{2}\left[1-t^{\frac{n+1}{2}+1}-(1-t)^{\frac{n+1}{2}+1}\right], \\
	 c_{\frac{n+1}{2}}^n(t)&=\frac{1}{2}\left[1-t^{\frac{n+1}{2}+1}-(1-t)^{\frac{n+1}{2}+1}\right]+t^{\frac{n+1}{2}}(1-t).
\end{align*}
In \cite{d-p} it was also proved that the DP basis is also NTP.
In the case that $u=(c_0^n,\ldots,c_n^n)$, the corresponding basis $r_{DP}=(r_0^n,\ldots,r_n^n)$, constructed as in \eqref{eq:},
will be called rational DP basis.

As commented above, the rational Said-Ball and DP bases are also NTP.

If we consider a sequence of positive weights $(w_i^n)_{i=0}^n$ and taking into account that
$\sum_{j=0}^n w_j^n b_j^n(t)\in\mathcal{P}_n([0,1])$ and that $s$ and $c$ are bases of $\mathcal{P}_n([0,1])$,  
then there exist two sequences of weights $(\overline{w}_i^n)_{i=0}^n$ and $(\tilde{w}_i^n)_{i=0}^n$ satisfying
\begin{equation}\label{eq:rel.RB}
	\sum_{j=0}^n w_j^n b_j^n(t)=\sum_{j=0}^n \overline{w}_j^n s_j^n(t)=\sum_{j=0}^n \tilde{w}_j^n c_j^n(t),\quad t\in [0,1].
\end{equation}
If $\overline{w}_i^n,\tilde{w}_i^n>0$ for all $i=0,\ldots,n$, then
the rational Said-Ball basis $r_{SB}$ formed with the weights $(\overline{w}_i^n)_{i=0}^n$ and the rational
DP basis $r_{DP}$ formed with the weights $(\tilde{w}_i^n)_{i=0}^n$ are both NTP bases of the space
of rational functions $\langle r_B\rangle$, where $r_B$ is the rational Bernstein basis formed
with the weights $(w_i^n)_{i=0}^n$.
So, sequences of positive weights $(w_i^n)_{i=0}^n$ have been randomly generated for
each $n$ in $\{3,\ldots,8\}$, where each $w_i^n$ is an integer in the interval $[1,1000]$,
until we have obtained a sequence such that there exists 
positive sequences $(\overline{w}_i^n)_{i=0}^n$ and $(\tilde{w}_i^n)_{i=0}^n$ 
satisfying \eqref{eq:rel.RB}.
Then we have the normalized B-basis $r_B$, and the NTP bases $r_{SB}$
and $r_{DP}$ of $\langle r_B\rangle$. 

Let $(t_i)_{i=1}^{n+1}$ be the sequence of points given by $t_i=i/(n+2)$ for $i=1,\ldots,n+1$. Then we have 
considered the following collocation matrices:
\begin{align*}
	M^n&=\left(\frac{w_j^n b_j^n(t_i)}{\sum_{k=0}^n w_k^n b_k^n(t_i)}\right)_{1\leq i\leq n+1}^{0\leq j \leq n},\\
	B_1^n&=\left(\frac{\overline{w}_j^n s_j^n(t_i)}{\sum_{k=0}^n \overline{w}_k^n s_k^n(t_i)}\right)_{1\leq i\leq n+1}^{0\leq j \leq n}\quad\text{and}\quad 
	B_2^n=\left(\frac{\tilde{w}_j^n c_j^n(t_i)}{\sum_{k=0}^n \tilde{w}_k^n c_k^n(t_i)}\right)_{1\leq i\leq n+1}^{0\leq j \leq n},
\end{align*}
for $n=3,\ldots,8$.
We have computed the eigenvalues and the singular values of $M^n$, $B_1^n$ and $B_2^n$ 
for $n=3,\ldots,8$ with Mathematica
using a precision of $100$ digits. We can see the corresponding minimal eigenvalues and singular values in Table \ref{tab:min}.
It can be observed that the minimal eigenvalue, resp. singular value, of $M_n$ is higher than the minimal eigenvalue,
resp. singular value, of $B_1^n$ and $B_2^n$ as Corollary \ref{cor:eig} has proved.

\begin{table}[h!]
\centering\begin{tabular}{|r|cc|cc|cc|}
\hline
$n$ & \multicolumn{2}{|c|}{$M^n$} & \multicolumn{2}{|c|}{$B_1^n$} & \multicolumn{2}{|c|}{$B_2^n$} \\
       & $\lambda_{min}$ & $\sigma_{min}$ & $\lambda_{min}$ & $\sigma_{min}$ & $\lambda_{min}$ & $\sigma_{min}$ \\
 \hline
$3$	& $2.9940e-2$ & $1.2267e-2$ & $2.6333e-2$ & $1.2097e-2$ &	$7.1114e-3$ &	$5.2420e-3$ \\	
$4$	& $6.7992e-3$	& $5.4745e-3$ & $6.3025e-3$ & $5.3558e-3$ &	$5.8627e-3$ & $5.2003e-3$ \\	
$5$ & $7.1826e-3$ & $6.6451e-3$ & $3.0020e-3$ &	$2.9674e-3$ & $4.0691e-4$ & $3.5263e-4$ \\	
$6$	& $2.1129e-3$ & $2.0654e-3$ & $6.9654e-4$ &	$5.8389e-4$ & $4.1580e-4$	 & $3.2558e-4$ \\	
$7$	& $1.0044e-3$	& $4.2778e-4$ & $2.7894e-4$ & $2.2178e-4$ &	$2.1500e-5$ &	$1.6099e-5$ \\	
$8$ & $3.3227e-4$ & $3.2780e-4$ & $4.2257e-5$ & $1.8605e-5$ &	$2.4263e-6$ &	$1.0410e-6$ \\	
\hline
\end{tabular}
\caption{The minimal eigenvalue and singular value of $M^n$, $B_1^n$ and $B_2^n$}\label{tab:min}
\end{table}

We have also computed $\kappa_{\infty}(M^n)$, $\kappa_{\infty}(B_1^n)$ and $\kappa_{\infty}(B_2^n)$ for $n=3,\ldots,8$
with Mathematica. The results can be seen in Table \ref{tab:k}. It can be observed that 
$\kappa_\infty(M^n)\leq \kappa_\infty(B_i^n)$ for $i=1,2$, as it has been shown in Corollary \ref{cor:eig}.

\begin{table}[h!]
\centering\begin{tabular}{|r|c|c|c|}
\hline
$n$ & $\kappa_\infty(M^n)$ & $\kappa_\infty(B_1^n)$ & $\kappa_\infty(B_2^n)$ \\
 \hline
$3$ & $1.4138e+2$ & $1.4138e+2$ & $2.9393e+2$ \\
$4$ & $3.4704e+2$ & $3.4704e+2$ & $3.4704e+2$ \\
$5$ & $1.6822e+2$ & $4.3900e+2$ & $4.3526e+3$ \\
$6$ & $7.5191e+2$ & $3.1923e+3$ & $4.2045e+3$ \\
$7$ & $4.7287e+3$ & $5.5742e+3$ & $8.1522e+4$ \\ 
$8$ & $4.2039e+3$ & $1.2637e+5$ & $1.6388e+6$ \\
\hline
\end{tabular}
\caption{Infinity conditions numbers of $M^n$, $B_1^n$ and $B_2^n$}\label{tab:k}
\end{table}

\begin{rem}
On the one hand, we have seen in Corollary \ref{cor:eig} that the minimal eigenvalue and the minimal singular value of the
collocation matrix of the normalized B-basis are always greater than the minimal eigenvalue and the minimal
singular value, respectively, of the corresponding collocation matrix of the NTP bases of the
corresponding space of functions. This fact has also been illustrated in the previous numerical experiments. 
On the other hand, the maximal eigenvalue of the collocation matrix of an NTP basis of a space of functions, 
including the corresponding normalized B-basis, is always equal to $1$ because all these collocation matrices
are stochastic. So, an interesting
question arises: does there exist any relation between the maximal singular value of
the collocation matrices of the normalized B-basis of a space of functions and 
those of the corresponding collocation matrices of NTP bases of the same space?
In order to answer this question 
Table \ref{tab:max} also shows the maximal singular value of $M^n$, $B_1^n$ and $B_2^n$
for $n=3,\ldots,8$.
We can observe that in some cases the maximal singular value of $M^n$ is lower than
the maximal singular value of $B_1^n$ and $B_2^n$, for example for $n=5$. In other cases,
the maximal singular value of $M^n$ is higher than
the maximal singular value of $B_1^n$ and $B_2^n$, for example for $n=4$. Hence, we can conclude that there is not
a relation between the maximal singular value of the collocation matrix 
of a normalized B-basis and that of the corresponding collocation matrix of the NTP bases of the
corresponding space of functions. 

By Corollary \ref{cor:eig}, we have that $\kappa_{\infty}(M^n)\leq \kappa_{\infty}(B_i^n)$
and that $\sigma_{min}(M^n)\ge\sigma_{min}(B_i^n)$ for $i=1,2$ and $n=3,\ldots,8$. 
Taking into account that $\kappa_2(A)$ is equal to $\sigma_{max}(A)/\sigma_{min}(A)$, another interesting question
arises: does there exist an analogous relation with
$\kappa_2$ instead of $\kappa_\infty$ for the collocation matrices
of normalized B-bases and NTP bases? From the data in Tables \ref{tab:min}
and \ref{tab:max}, we have that $\kappa_2(M^3)>\kappa_2(B_1^3)$ and
$\kappa_2(M^5)<\kappa_2(B_i^5)$ for $i=1,2$. Hence, there is no any relation
between the condition number $\kappa_2$ of the collocation matrices 
of the normalized B-basis and these of the corresponding collocation matrices of NTP bases.
\end{rem}

\begin{table}[h!]
\centering\begin{tabular}{|r|c|c|c|}
\hline
$n$ & $\sigma_{max}(M^n)$ & $\sigma_{max}(B_1^n)$ & $\sigma_{max}(B_2^n)$ \\
 \hline
$3$ & $1.1934e+0$ & $1.1215e+0$ & $1.4619e+0$ \\
$4$ & $1.1074e+0$ & $1.0977e+0$ & $1.0542e+0$ \\
$5$ & $1.0608e+0$ & $1.0764e+0$ & $1.5601e+0$ \\
$6$ & $1.0709e+0$ & $1.1728e+0$ & $1.2136e+0$ \\
$7$ & $1.1237e+0$ & $1.4003e+0$ & $1.5872e+0$ \\
$8$ & $1.0968e+0$ & $1.3374e+0$ & $1.6461e+0$ \\
\hline
\end{tabular}
\caption{The maximal singular value of $M^n$, $B_1^n$ and $B_2^n$}\label{tab:max}
\end{table}


\end{document}